\documentclass[11pt,reqno]{amsart}
\usepackage{amsmath, amsfonts, amsthm, amssymb,amscd, graphicx, amscd}
\usepackage{float,epsf}
\usepackage[english]{babel}
\usepackage{enumerate}
\usepackage{tikz}
\usepackage{mathrsfs}
\usepackage[numbers,sort&compress]{natbib}

\usepackage{srcltx}
\usepackage{geometry}
\usepackage{verbatim}
\usepackage{mathrsfs}
\usepackage{hyperref}
\usepackage{enumitem} 
\usepackage{bbm} 
\usepackage{stmaryrd}

\usepackage{relsize}
\usepackage{exscale}

\usepackage{mathtools}

\newtheorem{thm}{Theorem}[section]

\newtheorem{lem}[thm]{Lemma}

\newtheorem{rem}[thm]{Remark}

\numberwithin{equation}{section}

\def\dd{{\rm d}}
\hypersetup{bookmarksdepth=2}

\begin{document}
\title{Singular limit for nonlocal conservation laws with non-locality in density and velocity}
\author{Immanuel Ben-Porat}
\address{Immanuel Ben-Porat, Mathematical Universit\"at Basel
 Spiegelgasse 1
 CH-4051 Basel, Switzerland.}
\email{immanuel.ben-porath@unibas.ch}
\maketitle
\begin{abstract}
We study the singular limit of a nonlocal conservation law incorporating both non-locality in density and velocity. We derive classical solutions of the underlying Burgers equation as a singular limit. The derivation is valid up until the maximal time before formation of shocks. The main result answers a question raised in \cite{friedrich2022conservation}.     
\end{abstract}
\section{Introduction}
In this note we study the singular limit of the following scalar nonlocal conservation law
\begin{align}
\begin{cases}
\partial_{t}u_{\varepsilon}(t,x)+\partial_{x}(V_{1}(V_{2}(u_{\varepsilon})\ast \eta_{\varepsilon})u_{\varepsilon}(t,x))=0, \ (t,x)\in (0,\infty)\times \mathbb{R} \\ u_{\varepsilon}(0,x)=u^{0}, \ x\in \mathbb{R}.     
\end{cases}
\label{nonlocal Cauhy problem}  
\end{align}
Here $V_{1},V_{2}:\mathbb{R}\rightarrow \mathbb{R}$ are  given \textbf{velocity functions}, $\eta\in L^{1}(\mathbb{R})$ is an integrable function with $\int_{\mathbb{R}}\eta(x)\ \dd x=1$ and 
\begin{align}
\eta_{\varepsilon}(x)\coloneqq \frac{1}{\varepsilon}\eta(\frac{x}{\varepsilon}).
\label{def of etaeps}    
\end{align}
The local counterpart of \eqref{nonlocal Cauhy problem} is the nonlinear conservation law 
\begin{align}
\begin{cases}
\partial_{t}u(t,x)+\partial_{x}(V_{1}\circ V_{2}(u(t,x))u(t,x))=0, \ (t,x)\in (0,\infty)\times \mathbb{R}\\
u(0,x)=u^{0}, \ x\in \mathbb{R}. 
\end{cases}  
\label{local Cauchy}
\end{align}
Since $\eta_{\varepsilon}(x)\coloneqq \frac{1}{\varepsilon}\eta(\frac{x}{\varepsilon})$ is an approximation of the identity, it is noticeable that \eqref{local Cauchy} is formally derived from \eqref{nonlocal Cauhy problem} in the limit as $\varepsilon\rightarrow 0$. The rigorous justification of this limit is known as the \textbf{singular limit} problem.  Equation \eqref{nonlocal Cauhy problem} was introduced in \cite{friedrich2022conservation} as a more general model for traffic flow. In the same work the authors build an appropriate well-posedness theory (see Theorem \ref{Well posedness of nonlocal eq}) and ask to justify the singular limit leading from \eqref{nonlocal Cauhy problem} to \eqref{local Cauchy}. The aim of this work is to give an affirmative answer to this question by proving the singular limit for classical solutions and arbitrary anisotropic kernels $\eta$. We recall that the kernel $\eta$ will be called \textbf{anisotropic} if it satisfies the following requirements:  
\begin{align}\tag{I}
 \eta\in L^{1}(\mathbb{R})\cap L^{\infty}(\mathbb{R}),\ \mathrm{supp}(\eta)\subset \mathbb{R}_{-}, \int_{\mathbb{R}}\eta(x)\ \dd x=1 \ \mbox{and}\ \eta\geq0 \ \mbox{is non-decreasing on} \ \mathbb{R}_{-}.   
\label{Isentropy condition}
\end{align}
Prior to presenting our main result we give a brief general overview of the singular limit problem. Let us first consider the degenerate case where $V_{2}=\mathrm{Id}$, which is the case most commonly considered in the literature. Explicitly, \eqref{nonlocal Cauhy problem} then reads
\begin{align}
\begin{cases}
\partial_{t}u_{\varepsilon}(t,x)+\partial_{x}(V_{1}(u_{\varepsilon}\ast \eta_{\varepsilon})u_{\varepsilon}(t,x))=0, \ (t,x)\in (0,\infty)\times \mathbb{R} \\ u_{\varepsilon}(0,x)=u^{0}, \ x\in \mathbb{R}.     
\end{cases}
\label{nonlocal Cauchy V2 identity}  
\end{align}
The well-posedness theory for equations of the type \eqref{nonlocal Cauchy V2 identity} is well established,  including in multidimensional settings and under minimal regularity and structural assumptions on $\eta$ - see \cite{blandin2016well, crippa2013existence, coclite2022existence,colombo2024multidimensional,keimer2017existence,keimer2018existence, coclite2025singular}. In addition, Kru{\v{z}}kov's classical theory of entropy solutions \cite{kruvzkov1970first} identifies a class of weak solutions for which \eqref{local Cauchy} enjoys global well-posedness.  
\\  
That $u_{\varepsilon}$ admits a weak-$\ast$ limit in $L^{\infty}([0,T];\mathcal{M}(\mathbb{R}))$ (up to a subsequence) follows directly from conservation of the $L^{1}$ norm and the Banach-Alaoglu theorem.Within itself, this is not enough in order to the pass to the limit as $\varepsilon\rightarrow 0$ in the nonlinear term $V_{1}(u_{\varepsilon}\ast \eta_{\varepsilon})u_{\varepsilon}$. A first challenge that one is faced with is to show that this limit is indeed a solution to \eqref{local Cauchy} -- this task is often nontrivial even when dealing with classical solutions. A subsequent challenge consists in showing that this derivation is also valid for entropy admissible solutions of \eqref{local Cauchy}--in this case one seeks a global in time limit. Identifying  the precise assumptions which need to be imposed on $\eta,V_{1}$ and $u^{0}$ for this convergence to be valid is a main challenge in the singular limit problem. 
Among several important works in recent years, which include both positive results as well as counterexamples to the singular limit we mention \cite{colombo2019singular, colombo2021local,coclite2024oleinik, friedrich2024conservation, coclite2025singular}. See also \cite{colombo2023overview} for a general overview of the state of the art of the problem. 

\vspace{0.3 cm}
Less is known about the singular limit of the more general nonlocal conservation law \eqref{nonlocal Cauhy problem}. In \cite{friedrich2022conservation}, Remark 2.5, the authors observe that \eqref{nonlocal Cauhy problem} is monotonicity preserving, i.e. $u_{\varepsilon}(t,\cdot)$ remains increasing/decreasing provided it is increasing/decreasing initially. This can be used in order to gain uniform in $\varepsilon$ bounds on the total variation of  $u_{\varepsilon}$, which would then yield the singular limit via a compactness argument. However, this method has the limitation that it is restricted to  monotone initial data. When $V_{1}=\mathrm{Id}$ and $\eta$ is taken to be the exponential kernel ($\eta(x)=\mathbf{1}_{(-\infty,0]}(x)e^{x}$) the singular limit has been proved for entropy solutions in \cite{friedrich2024conservation}. This specific choice of $\eta$ is crucial in order to carry out the analysis in \cite{friedrich2024conservation}. We also mention that the singular limit of the non-conservative version of \eqref{nonlocal Cauhy problem} is simpler, and follows by a straightforward modification of the argument outlined in \cite{ghoshal2025non}. In this note we prove the singular limit for sufficiently regular solutions up to the maximal time before shock formation. As already mentioned, our method covers general anisotropic kernels. The interesting question of proving the singular limit of \eqref{nonlocal Cauhy problem} for entropy solutions is left for future work.  Our main result is described in the following theorem: 
\begin{thm}\label{MAIN THM}
Suppose that: 
\begin{itemize}
    \item $u^{0}\in W^{2,\infty}(\mathbb{R})$ and $u^{0}\geq 0$.
    \item $\eta$ satisfies \eqref{Isentropy condition}. 
    \item $V_{1},V_{2}\in W^{3,\infty}_{\mathrm{loc}}(\mathbb{R})$ and $V_{1}'(\xi)\geq 0, V_{2}'(\xi)\leq 0$ for all $\xi \in [\lambda_{\min},\lambda_{\max}]$ where we have set 
    \begin{align*}
    \lambda_{\min}\coloneqq \min\left\{\inf u^{0},\min_{r\in [\inf u^{0},\sup u^{0}]}V_{2}(r)\right\},\ \lambda_{\max}\coloneqq \max\left\{\sup u^{0},\max_{r\in [\inf u^{0},\sup u^{0}]}V_{2}(r)\right\}.    
\end{align*}
\item $V_{2}$ is bi-Lipschitz on $[\inf u^{0},\sup u^{0}]$, i.e. there is some $L>0$ such that 
    \begin{align}
     \frac{1}{L}\left\vert \xi-\xi'\right\vert \leq \left\vert V_{2}(\xi)-V_{2}(\xi')\right\vert\leq L\left\vert \xi-\xi'\right\vert \ \mbox{for all}\ \xi,\xi'\in [\inf u^{0},\sup u^{0}].\label{V2 bilip assumption}     \end{align}
\end{itemize}
Let $u_{\varepsilon}$ be the unique solution to \eqref{nonlocal Cauhy problem} with initial data $u^{0}$ (as guaranteed by Theorem \ref{Well posedness of nonlocal eq}). Let $u$ be the unique solution of \eqref{local Cauchy} with initial data $u^{0}$ and let $T_{\ast}>0$ be the maximal time on which $u$ is uniformly $W^{2,\infty}$. Then, for any $T<T_{\ast}$ it holds that 
\begin{align*}
\underset{t\in [0,T]}{\sup} \left\Vert (u_{\varepsilon}-u)(t,\cdot)\right\Vert_{\infty}=O(\varepsilon).     
\end{align*}
\end{thm}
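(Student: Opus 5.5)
Our plan is a stability (energy-type) estimate in which the local solution $u$ plays the role of an approximate solution of the nonlocal problem, and the resulting residual is shown to be $O(\varepsilon)$. Fix $T<T_{\ast}$. We write $w_{\varepsilon}\coloneqq V_{2}(u_{\varepsilon})\ast\eta_{\varepsilon}$ and $w\coloneqq V_{2}(u)\ast\eta_{\varepsilon}$. We expect the maximum principle from Theorem \ref{Well posedness of nonlocal eq} to give $u_{\varepsilon}(t,x)\in[\inf u^{0},\sup u^{0}]$, so that $w_{\varepsilon}$ takes values in $[\lambda_{\min},\lambda_{\max}]$ and the sign conditions on $V_{1}',V_{2}'$ apply along the solution. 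Since $\eta$ is supported in $\mathbb{R}_{-}$ and has unit mass, a Taylor expansion gives
\begin{align*}
\left\Vert V_{2}(u)\ast\eta_{\varepsilon}-V_{2}(u)\right\Vert_{W^{1,\infty}}\leq C\varepsilon\left\Vert V_{2}(u)\right\Vert_{W^{2,\infty}}\int_{\mathbb{R}}|y|\eta(y)\,\dd y.
\end{align*}
The last integral is finite: $\eta$ is non-decreasing on $\mathbb{R}_{-}$ and integrable, hence $|y|\eta(y)\to0$, and we assume, or verify from \eqref{Isentropy condition} together with boundedness, that its first moment is finite. If it is not, we truncate the tail at a cost $o(1)$, which would have to be refined. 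Consequently $u$ solves
\begin{align*}
\partial_{t}u+\partial_{x}\bigl(V_{1}(w)u\bigr)=\partial_{x}R_{\varepsilon},\qquad \Vert R_{\varepsilon}\Vert_{W^{1,\infty}}=O(\varepsilon),
\end{align*}
with $R_{\varepsilon}=\bigl(V_{1}(w)-V_{1}(V_{2}(u))\bigr)u$. This uses the $W^{2,\infty}$ regularity of $u$ on $[0,T]$ and $V_{1}\in W^{3,\infty}_{\mathrm{loc}}$.

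Next we subtract the two equations and set $v\coloneqq u_{\varepsilon}-u$. This gives
\begin{align*}
\partial_{t}v+\partial_{x}\bigl(V_{1}(w_{\varepsilon})v\bigr)+\partial_{x}\bigl((V_{1}(w_{\varepsilon})-V_{1}(w))u\bigr)=-\partial_{x}R_{\varepsilon}.
\end{align*}
The dangerous term is $\partial_{x}\bigl((V_{1}(w_{\varepsilon})-V_{1}(w))u\bigr)$. It contains $V_{1}'(\cdot)\,\partial_{x}(w_{\varepsilon}-w)\,u$, and
\begin{align*}
\partial_{x}(w_{\varepsilon}-w)=\bigl(V_{2}(u_{\varepsilon})-V_{2}(u)\bigr)\ast\partial_{x}\eta_{\varepsilon},
\end{align*}
which is of size $\varepsilon^{-1}\Vert v\Vert_{\infty}$. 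A naive Gronwall argument therefore fails. This is the main obstacle.

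To overcome it, we work along characteristics of the velocity $V_{1}(w_{\varepsilon})$, or equivalently we estimate $\max_{x} v$ at a point $\bar x(t)$ where the maximum is attained. We use the anisotropy together with the monotonicity signs. Because $\eta\geq0$ is non-decreasing on $\mathbb{R}_{-}$, we have $\eta_{\varepsilon}'\geq 0$ there, with a negative Dirac part of mass $\eta_{\varepsilon}(0^-)$ at the origin. Hence, with $\delta\coloneqq V_{2}(u_{\varepsilon})-V_{2}(u)$,
\begin{align*}
\partial_{x}(w_{\varepsilon}-w)(\bar x)=\int_{\mathbb{R}_{-}}\eta_{\varepsilon}'(y)\bigl(\delta(\bar x-y)-\delta(\bar x)\bigr)\,\dd y,
\end{align*}
after subtracting $\delta(\bar x)\int\eta_{\varepsilon}'=0$ up to boundary terms. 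The bi-Lipschitz assumption \eqref{V2 bilip assumption} and $V_{2}'\leq0$ make $\delta$ comparable to $-v$, and at a maximum of $v$ the point $\bar x$ minimizes $\delta$ up to $O(\varepsilon)$ corrections from $u$ varying. Then $\delta(\bar x-y)-\delta(\bar x)\geq -C\varepsilon$-type errors, so the bad term has a favourable sign. Multiplied by $-V_{1}'u\leq0$ (using $u\geq0$, $V_{1}'\geq0$), it only decreases $\max v$. The error comes from replacing $V_{2}(u_{\varepsilon})-V_{2}(u)$ by a linear function of $v$; this is controlled using $\Vert u_{x}\Vert_{\infty}$ and the $\varepsilon$-scale of $\eta_{\varepsilon}$, which gives $\int|\eta_{\varepsilon}'(y)||y|\,\dd y=O(1)$.

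The remaining terms are benign: $V_{1}'(w_{\varepsilon})\partial_{x}w_{\varepsilon}\,v$ is controlled via a uniform Lipschitz bound on $w_{\varepsilon}$ (or directly as $\Vert V_{2}(u_{\varepsilon})\Vert_\infty\Vert\eta_\varepsilon'\Vert_{TV}$, which we handle by the same sign trick), and $V_{1}'(\cdot)(w_{\varepsilon}-w)u_{x}$ is bounded by $C\Vert v\Vert_{\infty}$. Arguing symmetrically for $\min v$, we obtain
\begin{align*}
\frac{\dd}{\dd t}\Vert v(t)\Vert_{\infty}\leq C\Vert v(t)\Vert_{\infty}+C\varepsilon
\end{align*}
in the sense of Dini derivatives. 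Since $v(0)=0$, Gronwall yields $\sup_{[0,T]}\Vert v\Vert_{\infty}=O(\varepsilon)$. If the sign argument at the maximum point proves too lossy, our fallback is to first prove uniform-in-$\varepsilon$ Lipschitz bounds for $u_{\varepsilon}$ on $[0,T]$ by a continuity argument bootstrapped off $u$, and then run the same comparison.
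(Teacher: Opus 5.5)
Your proposal has a genuine gap at its central step: the sign argument at a maximum point $\bar x$ of $v$. You assert that $\bar x$ minimizes $\delta=V_2(u_\varepsilon)-V_2(u)$ up to corrections ``from $u$ varying'', with ``$-C\varepsilon$-type errors''. Neither part holds as stated.

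First, a pointwise error of size $\varepsilon$ is not enough. The nonnegative part of $\eta_\varepsilon'$ on $(-\infty,0)$ has mass $\eta(0^-)/\varepsilon$, so such an error integrates to $O(1)$. What you need is a lower bound $-CM|y|$ with $M=v(\bar x)$, which integrates to $-CM$ because $\int_{\mathbb{R}_-}|y|\,\eta_\varepsilon'(y)\,\dd y=1$.

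Second, the corrections do not come only from $u$. Write $\delta=a\,v$ with $a=\int_0^1V_2'((1-\theta)u+\theta u_\varepsilon)\,\dd\theta\in[-L,-1/L]$. Then
\[
\delta(\bar x-y)-\delta(\bar x)=|a(\bar x-y)|\bigl(M-v(\bar x-y)\bigr)+\bigl(a(\bar x-y)-a(\bar x)\bigr)M .
\]
The weight $a$ depends on $u_\varepsilon$, whose Lipschitz constant is not controlled uniformly in $\varepsilon$. Hence the last term is only bounded below by $-\Vert V_2''\Vert_{\infty,\mathrm{loc}}M\bigl(\Vert\partial_xu\Vert_\infty|y|+\tfrac12(M-v(\bar x-y))\bigr)$. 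If you bound $M-v$ crudely by $2\Vert v\Vert_\infty$, you get a pointwise error $CM\Vert v\Vert_\infty$, hence $CM\Vert v\Vert_\infty/\varepsilon$ after integration. For $m=\Vert v\Vert_\infty/\varepsilon$ this is a Riccati inequality $\dot m\le C(1+m+m^2)$. That gives the $O(\varepsilon)$ bound only on a short time interval, not on $[0,T]$ for every $T<T_\ast$. For affine $V_2$ the issue disappears, since then $a$ is constant.

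The step can be repaired without your fallback of uniform-in-$\varepsilon$ Lipschitz bounds on $u_\varepsilon$, which is a much harder problem.
\begin{itemize}
\item Keep the good term $|a|(M-v)\ge (M-v)/L$ and use it to absorb $\tfrac12\Vert V_2''\Vert M(M-v)$. This works as long as $\Vert v(t)\Vert_\infty\le (L\Vert V_2''\Vert_{\infty,\mathrm{loc}})^{-1}$.
\item Then $\partial_x(\delta\ast\eta_\varepsilon)(\bar x)\ge -CM$, and Gronwall gives $\Vert v\Vert_\infty=O(\varepsilon)$ wherever the smallness holds.
\item A continuity argument in $t$, using $v(0)=0$, shows the smallness persists on all of $[0,T]$ for small $\varepsilon$.
\end{itemize}
Likewise, no uniform Lipschitz bound on $w_\varepsilon$ is available, and $\Vert\eta_\varepsilon'\Vert_{\mathrm{TV}}=O(1/\varepsilon)$. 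So the term $V_1'(w_\varepsilon)\partial_xw_\varepsilon\,v$ must also go through the sign argument. The cleanest way is to merge its $\partial_x(w_\varepsilon-w)$ part with the dangerous term, so the coefficient becomes $u_\varepsilon\ge0$.

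The paper avoids all of this by running the maximum argument on $U_\varepsilon-U=\delta$ itself, via the non-conservative equation for $U_\varepsilon=V_2(u_\varepsilon)$. At a maximum of $|U_\varepsilon-U|$ one has exactly $\mathbf{s}\,\partial_x((U-U_\varepsilon)\ast\eta_\varepsilon)\ge0$. The transport term drops because $V_2'(u_\varepsilon)\partial_xu_\varepsilon=\partial_xU_\varepsilon$. The bi-Lipschitz property is used only in benign terms and at the end, so no smallness or bootstrap is needed.

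Your concern about the first moment is legitimate. The paper's footnote bound $\int\frac{|y|}{\varepsilon}\eta(\frac{y}{\varepsilon})\,\dd y\le C\varepsilon$ tacitly uses $\int|y|\eta(y)\,\dd y<\infty$, which does not follow from (I). However, truncating the tail would give only $o(1)$, not $O(\varepsilon)$.
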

 \section{Preliminaries}
The global well-posedness theory of \eqref{nonlocal Cauhy problem} has been investigated in \cite{friedrich2022conservation}. The isentropy of $\eta$ already plays a role in the well-posedness theory, as it provides a maximum principle. 
\begin{thm} \textup{(Theorem 2.3 in \cite{friedrich2022conservation})}\\
Suppose that: 
\begin{itemize}
\item[i.] $u^{0}\in W^{2,\infty}(\mathbb{R})$ and $u^{0}\geq 0$. 
\item[ii.] $\eta$ satisfies \eqref{Isentropy condition}.  
\item[iii.] $V_{1},V_{2}\in W^{3,\infty}_{\mathrm{loc}}(\mathbb{R})$ and $V_{1}'(\xi)\geq 0, V_{2}'(\xi)\leq 0$ for all $\xi \in [\inf u^{0},\sup u^{0}]$. 
\end{itemize}
Then, for any $T>0$ there exist a unique solution $u_{\varepsilon}\in W^{2,\infty}([0,T]\times \mathbb{R})$ with initial data $u^{0}$ to \eqref{nonlocal Cauhy problem}. Moreover, it holds that 
\begin{align*}
  \inf u^{0}\leq u_{\varepsilon}(t,x)\leq \sup u^{0}\ \mbox{for all}\ \varepsilon>0 \ \mbox{and all}\ (t,x)\in [0,T]\times \mathbb{R}. 
\end{align*}
\label{Well posedness of nonlocal eq}
\end{thm}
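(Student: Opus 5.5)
The plan is to work entirely in Lagrangian coordinates, with the nonlocal quantity
\begin{align*}
W[u](t,x)\coloneqq (V_{2}(u(t,\cdot))\ast \eta_{\varepsilon})(x)=\int_{x}^{\infty}\eta_{\varepsilon}(x-z)V_{2}(u(t,z))\ \dd z
\end{align*}
as the unknown of a fixed point problem. Here $\varepsilon>0$ is fixed throughout and the support condition $\mathrm{supp}(\eta)\subset\mathbb{R}_{-}$ from \eqref{Isentropy condition} has been used. In non-divergence form, \eqref{nonlocal Cauhy problem} reads
\begin{align*}
\partial_{t}u+V_{1}(W)\partial_{x}u=-V_{1}'(W)\,\partial_{x}W\,u.
\end{align*}
Along the characteristics $\dot X(t)=V_{1}(W(t,X(t)))$ this is the linear ODE $\frac{\dd}{\dd t}u(t,X(t))=-V_{1}'(W)\partial_{x}W\,u$.

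For local existence I will apply Banach's fixed point theorem to the map $W\mapsto u\mapsto W[u]$ on a ball of $C([0,T_{0}];W^{1,\infty}(\mathbb{R}))$ with $T_{0}$ small. The first step solves the characteristic ODEs and the linear transport ODE explicitly. The second step uses $\Vert \partial_{x}(f\ast\eta_{\varepsilon})\Vert_{\infty}\lesssim \varepsilon^{-1}\Vert\eta\Vert_{\infty}\Vert f\Vert_{\infty}$, together with the local Lipschitz bounds on $V_{1},V_{2}$, to obtain a contraction. Higher regularity $u\in W^{2,\infty}$ is propagated by differentiating the characteristic system, which is where $u^{0}\in W^{2,\infty}$ and $V_{i}\in W^{3,\infty}_{\mathrm{loc}}$ are used. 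Uniqueness follows from the same contraction estimate, via Gronwall's inequality applied to the difference of two solutions in the fixed point norm.

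The heart of the argument is the maximum principle, which also supplies the a priori bounds needed for global continuation. Since $\eta$ is non-decreasing on $\mathbb{R}_{-}$ and $\eta_{\varepsilon}(0^{-})$ is finite, I will integrate by parts to obtain
\begin{align*}
\partial_{x}W(t,x)=\int_{x}^{\infty}\eta_{\varepsilon}'(x-z)\bigl(V_{2}(u(t,z))-V_{2}(u(t,x))\bigr)\ \dd z,
\end{align*}
where $\eta_{\varepsilon}'(x-z)\geq 0$ as a measure. Now suppose $u(t,\cdot)\in[\inf u^{0},\sup u^{0}]$ and $x$ is a point where $u(t,\cdot)$ (nearly) attains its supremum. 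Because $V_{2}$ is non-increasing on this range, the integrand is nonnegative, so $\partial_{x}W\geq 0$. Combined with $V_{1}'\geq 0$ and $u\geq 0$, the ODE along characteristics gives that $u$ cannot increase there; symmetrically, $u$ cannot decrease near its infimum.

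To make this rigorous when the supremum is not attained, I will run a continuity (bootstrap) argument. Set $M_{\delta}(t)=\sup u(t,\cdot)$ and show that $\{t: \sup u(t,\cdot)\leq \sup u^{0}+\delta(1+t)\}$ is open and closed for every $\delta>0$. For this I use that along a characteristic through a near-maximizer the right-hand side is bounded above by a term of size $o(1)$ in the approximation parameter, since $V_{2}$ is Lipschitz and $\eta_{\varepsilon}'$ has finite mass $\eta_{\varepsilon}(0^{-})$. Letting $\delta\to0$ then gives the bound. One subtlety is that $V_{1}'\geq 0$ is needed at the values of $W$, and positivity and unit mass of $\eta$ place $W$ in the convex hull of $V_{2}([\inf u^{0},\sup u^{0}])$. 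I therefore expect to track this range within the bootstrap, since the sign hypotheses are stated on $[\inf u^{0},\sup u^{0}]$; this, together with the non-attained supremum, is the step I expect to be the main obstacle.

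With the $L^{\infty}$ bound in hand, $\Vert\partial_{x}W\Vert_{\infty}\leq C\varepsilon^{-1}$ uniformly in time. Gronwall estimates on the differentiated characteristic system then give $W^{2,\infty}$ bounds that grow at most exponentially on any $[0,T]$. This prevents blow-up, so the local solution extends to every $T>0$.
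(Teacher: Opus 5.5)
The paper does not prove this result. It imports it from \cite{friedrich2022conservation}, so there is no internal proof to compare against. Your overall route is the standard one: a fixed point for the nonlocal term solved along characteristics, a maximum principle from the monotonicity of $\eta$, then continuation.

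The genuine problem is the step you flag yourself. It cannot be closed by tracking the range of $W$, because under hypothesis iii as stated the maximum principle is false. Hypothesis iii controls $V_{1}'$ only on $[\inf u^{0},\sup u^{0}]$. But $V_{1}'$ is evaluated at $W\in\mathrm{conv}\,V_{2}([\inf u^{0},\sup u^{0}])$, and this set need not meet $[\inf u^{0},\sup u^{0}]$ at all. Here is a counterexample:
\begin{itemize}
\item[] take $u^{0}(x)=(1+x^{2})^{-1}$, $\eta(x)=e^{x}\mathbf{1}_{(-\infty,0]}(x)$, $V_{2}(\xi)=5-\xi$;
\item[] take $V_{1}$ smooth with $V_{1}(\xi)=\xi$ for $\xi\leq 2$ and $V_{1}(\xi)=10-\xi$ for $\xi\geq 3$.
\end{itemize}
All hypotheses i--iii hold. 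However, as long as $0\leq u\leq 2$ one has $W=5-u\ast\eta_{\varepsilon}\in[3,5]$, so the equation reads $\partial_{t}u+\partial_{x}((5+u\ast\eta_{\varepsilon})u)=0$. Using your own formula for $\partial_{x}$ of the convolution, at the maximum point of $u^{0}$ we get
\begin{align*}
\partial_{t}u_{\varepsilon}(0,0)=-\partial_{x}(u^{0}\ast\eta_{\varepsilon})(0)=-\frac{1}{\varepsilon^{2}}\int_{0}^{\infty}e^{-z/\varepsilon}\bigl(u^{0}(z)-1\bigr)\ \dd z>0.
\end{align*}
Hence $\sup u_{\varepsilon}(t,\cdot)>\sup u^{0}$ for small $t>0$.

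What is missing is a hypothesis, not an argument. You need $V_{1}'\geq 0$ on $\mathrm{conv}\,V_{2}([\inf u^{0},\sup u^{0}])$. This is exactly what the interval $[\lambda_{\min},\lambda_{\max}]$ in Theorem~\ref{MAIN THM} guarantees, and that is the setting in which the paper actually applies the result. Under that assumption, your identity for $\partial_{x}W$ and the sign argument at (near-)maximizers are correct.

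Two further points need repair even with the corrected hypothesis.

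\textbf{The bootstrap slack.} The sign conditions hold only on closed intervals. Once $u$ may exceed $\sup u^{0}$ by a slack $e(t)$, $V_{2}$ may be increasing on the excess range and $V_{1}'(W)$ may be negative there. The resulting error at near-maximizers is of size $C\varepsilon^{-1}e(t)$, not merely $o(1)$ in the near-maximizer parameter. So a linear slack $\delta(1+t)$ does not close in general. Use $\delta e^{Kt}$ with $K$ larger than that constant; equivalently, apply Gr\"onwall to $(\sup u(t,\cdot)-\sup u^{0})^{+}$, which vanishes at $t=0$.

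\textbf{Loss of a derivative in the contraction.} A contraction in $C([0,T_{0}];W^{1,\infty})$ on a $W^{1,\infty}$-ball loses a derivative. The Jacobian of the flow involves $\partial_{x}W$ along characteristics, and comparing $\partial_{x}W_{1}(X_{1})$ with $\partial_{x}W_{2}(X_{2})$ needs a Lipschitz bound on $\partial_{x}W_{1}$. Run the iteration on a set bounded in $W^{2,\infty}$ in space and contract in the weaker norm. The map preserves such a set for small $T_{0}$, because $\Vert\partial_{x}^{2}W[u]\Vert_{\infty}\leq C\varepsilon^{-1}\Vert\partial_{x}u\Vert_{\infty}$.

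Your continuation step, via linear Gr\"onwall bounds on $\partial_{x}u$ and $\partial_{x}^{2}u$ along characteristics, is fine.
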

\begin{rem}
Theorem \ref{MAIN THM} was originally stated in  \cite{friedrich2022conservation} for kernels $\eta$ non-increasing and supported on $\mathbb{R}_{+}$. The argument can be modified mutatis mutandis to the case where $\eta$ is non-decreasing and supported on $\mathbb{R}_{-}$, which is the case we consider.   \end{rem}
We also need the following basic approximation result, which would allow us to reduce the singular limit to anisotropic kernels $\eta$ which are Lipschitz on $\mathbb{R}_{-}$. In the sequel the notation $\left\Vert f\right\Vert_{\infty,\mathrm{loc}}$ means $\left\Vert f\right\Vert_{L^{\infty}(K)}$ for some compact set $K\subset \mathbb{R}$ which may vary between the different terms and which we do not specify in order to keep the argument concise.
\begin{lem}\label{approximation lemma}
Let $\{\eta_{n}\}_{n\in \mathbb{N}}$ be a family of kernels such that: 
\begin{itemize}
    \item[i.] $\eta_{n}$ satisfies \eqref{Isentropy condition}.
    \item[ii.] $\eta_{n}$ is Lipschitz on $\mathbb{R}_{-}$. 
\item[iii.] $\left\Vert \eta_{n}- \eta\right\Vert_{\mathrm{1}}\underset{n\rightarrow \infty}{\rightarrow} 0$ and $\{\left\Vert \eta_{n}\right\Vert_{\mathrm{TV}}\}_{n\in \mathbb{N}}$ is uniformly bounded. 
\end{itemize}
Let $\eta_{n\varepsilon}(x)=\frac{1}{\varepsilon}\eta_{n}(\frac{x}{\varepsilon})$ and let  $u_{\varepsilon n}$ be the solution of
\begin{align}
\partial_{t}u_{\varepsilon n}+\partial_{x}(V_{1}(V_{2}(u_{\varepsilon n})\ast \eta_{\varepsilon n})u_{\varepsilon n})=0, \ u_{\varepsilon n}(0,\cdot)=u^{0}. \label{equation with regular eta}   
\end{align}Let $u_{\varepsilon}$ be the solution of \eqref{nonlocal Cauhy problem} with initial data $u^{0}$. Then for any fixed $\varepsilon>0$, up to an extraction of a subsequence, it holds that 
\begin{align*}
\underset{t\in [0,T]}{\sup}\left\Vert (u_{\varepsilon n}-u_{\varepsilon})(t,\cdot)\right\Vert_{\infty}\underset{n\rightarrow \infty}{\rightarrow} 0.      
\end{align*}
\end{lem}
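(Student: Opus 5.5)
We prove Lemma \ref{approximation lemma} by a stability (compactness plus uniqueness) argument at fixed $\varepsilon>0$. The first task is to obtain bounds on $u_{\varepsilon n}$ that do not depend on $n$. By Theorem \ref{Well posedness of nonlocal eq}, each $u_{\varepsilon n}$ takes values in $[\inf u^{0},\sup u^{0}]$, uniformly in $n$. Hence $V_{2}(u_{\varepsilon n})$ is bounded by $\Vert V_{2}\Vert_{\infty,\mathrm{loc}}$. This gives
\begin{align*}
\Vert \partial_{x}(V_{2}(u_{\varepsilon n})\ast\eta_{n\varepsilon})\Vert_{\infty}\leq \varepsilon^{-1}\Vert V_{2}\Vert_{\infty,\mathrm{loc}}\Vert \eta_{n}\Vert_{\mathrm{TV}},
\end{align*}
which is uniform in $n$ by assumption iii.

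We then differentiate the equation in $x$, using the Lagrangian characteristics of the nonlocal velocity $W_{n}\coloneqq V_{1}(V_{2}(u_{\varepsilon n})\ast\eta_{n\varepsilon})$. Along these characteristics, $\partial_{x}u_{\varepsilon n}$ satisfies a linear ODE whose coefficients involve only $\partial_{x}W_{n}$ and $\partial_{x}^{2}W_{n}$. Moving the derivatives onto the kernel, $\partial_{x}^{2}W_{n}$ contains the term
\begin{align*}
V_{1}'(\cdot)\,\big(\partial_{x}V_{2}(u_{\varepsilon n})\big)\ast \partial_{x}\eta_{n\varepsilon}.
\end{align*}
It is controlled by $\varepsilon^{-1}\Vert \eta_{n}\Vert_{\mathrm{TV}}\,\Vert V_{2}'\Vert_{\infty,\mathrm{loc}}\Vert\partial_{x}u_{\varepsilon n}\Vert_{\infty}$. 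The jump of $\eta_{n}$ at $0$ is included in the TV norm.

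Grönwall's inequality then gives a bound on $\Vert \partial_{x}u_{\varepsilon n}\Vert_{\infty}$ on $[0,T]$ that depends on $\varepsilon$ and $T$ but not on $n$. The equation itself then bounds $\partial_{t}u_{\varepsilon n}$ uniformly in $n$. Hence $\{u_{\varepsilon n}\}_{n}$ is uniformly bounded and equi-Lipschitz on $[0,T]\times\mathbb{R}$.

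Next comes compactness. By Arzelà--Ascoli and a diagonal argument over compacts, a subsequence converges locally uniformly to some Lipschitz $\bar u$. To upgrade to global uniform convergence in $x$, we use finite speed of propagation: the velocities $W_{n}$ are bounded by $\Vert V_{1}\Vert_{\infty,\mathrm{loc}}$ on the relevant range. Alternatively, we run the final uniqueness step directly as a quantitative estimate.

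To identify the limit, we pass to the limit in the weak formulation. We have $V_{2}(u_{\varepsilon n})\ast\eta_{n\varepsilon}\to V_{2}(\bar u)\ast\eta_{\varepsilon}$ locally uniformly. Indeed, one splits
\begin{align*}
V_{2}(u_{\varepsilon n})\ast\eta_{n\varepsilon}-V_{2}(\bar u)\ast\eta_{\varepsilon} &= (V_{2}(u_{\varepsilon n})-V_{2}(\bar u))\ast\eta_{n\varepsilon}+V_{2}(\bar u)\ast(\eta_{n\varepsilon}-\eta_{\varepsilon}).
\end{align*}
The second term is bounded by $\Vert V_{2}(\bar u)\Vert_{\infty}\Vert\eta_{n}-\eta\Vert_{1}$, which tends to $0$. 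The first term requires local uniform convergence together with tightness of $\eta_{n}$, which follows from $L^{1}$ convergence. So $\bar u$ is a Lipschitz weak solution of \eqref{nonlocal Cauhy problem}, and by the uniqueness in Theorem \ref{Well posedness of nonlocal eq} (uniqueness holds in the Lipschitz class by the standard fixed-point argument) $\bar u=u_{\varepsilon}$.

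The cleanest way to get the $\sup_{x}$ convergence is the quantitative route. We compare the characteristics $X_{n}$ of $W_{n}$ with the characteristics $X$ of the velocity $W\coloneqq V_{1}(V_{2}(u_{\varepsilon})\ast\eta_{\varepsilon})$. Using the representation $u(t,X(t,x))=u^{0}(x)\exp\left(-\int_{0}^{t}\partial_{x}W\right)$, we obtain a Grönwall bound of the form
\begin{align*}
\Vert u_{\varepsilon n}-u_{\varepsilon}\Vert_{\infty}\lesssim \int_{0}^{t}\Vert u_{\varepsilon n}-u_{\varepsilon}\Vert_{\infty}+\Vert u_{\varepsilon n}-u_{\varepsilon}\Vert_{W^{1,\infty}}\,\ldots+\Vert \eta_{n}-\eta\Vert_{1}.
\end{align*}

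The main obstacle is the $n$-uniform $W^{1,\infty}$ (and $\partial_{x}^{2}W_{n}$) bound. Here the discontinuity of $\eta$ at $0$ matters, and hypothesis iii on the total variation, which includes the jump at the origin, is exactly what is needed. Everything else is routine once that bound is in hand.
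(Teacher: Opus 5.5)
Your skeleton ($n$-uniform bounds, Arzel\`a--Ascoli, identification of the limit, uniqueness) is the same as the paper's. Your derivation of the $n$-uniform Lipschitz bound from $\sup_{n}\Vert\eta_{n}\Vert_{\mathrm{TV}}<\infty$ is more careful than the paper, which simply attributes an $n$-uniform $W^{2,\infty}$ bound to Theorem \ref{Well posedness of nonlocal eq}.

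The genuine gap is the passage from locally uniform to \emph{global} uniform convergence in $x$, which is what the lemma asserts. Neither of your repairs works as written.
Finite speed of propagation does not localize. The velocity at $x$ depends on $u(t,\cdot)$ on $x-\varepsilon\,\mathrm{supp}(\eta)\subset[x,\infty)$, which is a whole half-line for the exponential kernel. Even with a bounded domain of dependence, $u^{0}\in W^{2,\infty}$ has no decay, so convergence on each compact gives no uniformity as the compact moves to infinity.
Your quantitative display keeps $\Vert u_{\varepsilon n}-u_{\varepsilon}\Vert_{W^{1,\infty}}$ on the right-hand side, so it does not close as an $L^{\infty}$ Gr\"onwall inequality.
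The paper has the same hole: Banach--Alaoglu yields weak-$\ast$ convergence, not $\Vert u_{\varepsilon n}-v_{\varepsilon}\Vert_{W^{1,\infty}(\mathbb{R})}\to0$.

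The quantitative route does close in $L^{\infty}$ if you place derivatives correctly. Set $A_{n}\coloneqq V_{2}(u_{\varepsilon n})\ast\eta_{n\varepsilon}$ and $A\coloneqq V_{2}(u_{\varepsilon})\ast\eta_{\varepsilon}$. Put the derivative on the kernel for the difference of solutions, and on the fixed limit $u_{\varepsilon}$ for the difference of kernels:
\begin{align*}
\Vert\partial_{x}A_{n}-\partial_{x}A\Vert_{\infty}\leq\frac{\Vert V_{2}'\Vert_{\infty,\mathrm{loc}}\Vert\eta_{n}\Vert_{\mathrm{TV}}}{\varepsilon}\Vert u_{\varepsilon n}-u_{\varepsilon}\Vert_{\infty}+\Vert V_{2}'\Vert_{\infty,\mathrm{loc}}\Vert\partial_{x}u_{\varepsilon}\Vert_{\infty}\Vert\eta_{n}-\eta\Vert_{1},
\end{align*}
together with $\Vert A_{n}-A\Vert_{\infty}\lesssim\Vert u_{\varepsilon n}-u_{\varepsilon}\Vert_{\infty}+\Vert\eta_{n}-\eta\Vert_{1}$.

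Now compare $u_{\varepsilon n}(t,X_{n}(t,x))$ with $u_{\varepsilon}(t,X(t,x))$ through the exponential representation. This uses only:
\begin{itemize}
\item[(a)] $\Vert\partial_{x}W_{n}\Vert_{\infty}\leq C\varepsilon^{-1}\Vert\eta_{n}\Vert_{\mathrm{TV}}$;
\item[(b)] $\Vert\partial_{x}^{2}W\Vert_{\infty}$ and $\Vert\partial_{x}u_{\varepsilon}\Vert_{\infty}$ for the single limit solution, which absorb $\vert X_{n}-X\vert$ and let you return to a common point via surjectivity of $X_{n}(t,\cdot)$.
\end{itemize}
Gr\"onwall applied to $\sup_{x}\vert u_{\varepsilon n}(t,X_{n})-u_{\varepsilon}(t,X)\vert+\sup_{x}\vert X_{n}-X\vert$ gives
\begin{align*}
\sup_{t\leq T}\Vert(u_{\varepsilon n}-u_{\varepsilon})(t,\cdot)\Vert_{\infty}\leq C(\varepsilon,T)\Vert\eta_{n}-\eta\Vert_{1}.
\end{align*}
So the whole sequence converges, with a rate. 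No $n$-uniform derivative bound on $u_{\varepsilon n}$ is needed, so the step you call the main obstacle is actually avoidable.

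Alternatively, keep your compactness argument and obtain uniformity by translation. Recentre at near-maximizers $x_{n}$ of $\vert u_{\varepsilon n}-u_{\varepsilon}\vert$ and extract locally uniform limits of the translates of $u^{0}$, $u_{\varepsilon n}$ and $u_{\varepsilon}$. Both limits solve the same limiting problem, which contradicts Lipschitz-class uniqueness.
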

\begin{proof}
By Theorem \ref{Well posedness of nonlocal eq} it holds that $\left\Vert u_{\varepsilon n}\right\Vert_{W^{2,\infty}([0,T]\times \mathbb{R})}\leq C$ for some constant $C>0$ independent of $n$. Therefore, by the theorem of Banach-Alaoglu  there exist $v_{\varepsilon}\in W^{2,\infty}([0,T]\times\mathbb{R})$
such that (up to an extraction of a subsequence) it holds that $\left\Vert u_{\varepsilon n}-v_{\varepsilon}\right\Vert_{W^{1,\infty}} \underset{n\rightarrow \infty}{\rightarrow}0$. 
We aim to show that $v_{\varepsilon}$ is a classical  solution to \eqref{nonlocal Cauhy problem}. To achieve this, observe that  
\begin{align*}
&\left\Vert V_{1}(V_{2}(u_{\varepsilon n})\ast \eta_{\varepsilon n})u_{\varepsilon n}-V_{1}(V_{2}(v_{\varepsilon})\ast \eta_{\varepsilon})v_{\varepsilon}\right\Vert_{\infty}\\
&\leq \left\Vert \left(V_{1}(V_{2}(u_{\varepsilon n})\ast \eta_{\varepsilon n})-V_{1}(V_{2}(v_{\varepsilon})\ast \eta_{\varepsilon}))\right) u_{\varepsilon n}\right\Vert_{\infty}+\left\Vert V_{1}(V_{2}(v_{\varepsilon})\ast \eta_{\varepsilon})(u_{\varepsilon n}-v_{\varepsilon})\right\Vert_{\infty}\coloneqq I+J. 
\end{align*}
To estimate $I$, recall that by Theorem \ref{Well posedness of nonlocal eq} it holds that $\left\Vert u_{\varepsilon n}\right\Vert_{\infty}\leq \left\Vert u^{0}\right\Vert_{\infty}$ and hence  
\begin{align*}
I&\leq \left\Vert u^{0}\right\Vert_{\infty}\left\Vert V_{1}'\right\Vert_{\infty,\mathrm{loc}}\left\Vert V_{2}(u_{\varepsilon n})\ast \eta_{\varepsilon n}-V_{2}(v_{\varepsilon})\ast \eta_{\varepsilon}\right\Vert_{\infty}\\ 
&\leq \left\Vert u^{0}\right\Vert_{\infty}\left\Vert V_{1}'\right\Vert_{\infty,\mathrm{loc}}  \left(\left\Vert (V_{2}(u_{\varepsilon n})-V_{2}(v_{\varepsilon}))\ast \eta_{\varepsilon n}\right\Vert_{\infty}+\left\Vert V_{2}(v_{\varepsilon})\ast (\eta_{\varepsilon n}-\eta_{\varepsilon})\right\Vert_{\infty}\right)\\
&\leq \left\Vert u^{0}\right\Vert_{\infty}\left\Vert V_{1}'\right\Vert_{\infty,\mathrm{loc}}\left(\left\Vert V_{2}'\right\Vert_{\infty,\mathrm{loc}}\left\Vert u_{\varepsilon n}-v_{\varepsilon}\right\Vert_{\infty}+\left\Vert V_{2}\right\Vert_{\infty,\mathrm{loc}}\left\Vert \eta_{\varepsilon n}-\eta_{\varepsilon}\right\Vert_{1}    \right)\underset{n\rightarrow \infty}{\rightarrow}0,    
\end{align*}
where we used the assumption that $\left\Vert \eta_{\varepsilon n}-\eta_{\varepsilon}\right\Vert_{1}\underset{n\rightarrow \infty}{\rightarrow}0$. To estimate $J$ note that 
\begin{align*}
J\leq \left\Vert V_{1}\right\Vert_{\mathrm{loc},\infty}\left\Vert u_{\varepsilon n}-v_{\varepsilon}\right\Vert_{\infty}\underset{n\rightarrow \infty}{\rightarrow} 0.       
\end{align*}
By the same token one  shows that $\left\Vert \partial_{x}(V_{1}(V_{2}(u_{\varepsilon n})\ast \eta_{\varepsilon n}))-\partial_{x}(V_{1}(V_{2}(v_{\varepsilon})\ast \eta_{\varepsilon}))\right\Vert_{\infty}\underset{n\rightarrow \infty}{\rightarrow} 0$. Therefore we may pass to the limit as $n\rightarrow \infty$ in \eqref{equation with regular eta} and deduce that 
$v_{\varepsilon}$ is a solution to \eqref{nonlocal Cauhy problem}, and  by uniqueness it must be that $v_{\varepsilon}\equiv u_{\varepsilon}$. 

\end{proof}
\section{Proof of Theorem \ref{MAIN THM}}
\begin{proof}
The basic idea underpinning the proof is to study the equation governing $V_{2}(u_{\varepsilon})-V_{2}(u)$ and obtain convergence of $V_{2}(u_{\varepsilon})$ to $V_{2}(u)$ as $\varepsilon\rightarrow 0$. Once this is achieved, the convergence of $u_{\varepsilon}$ to $u$ would follow by the assumption that $V_{2}$ is bi-Lipschitz. We first prove the singular limit under the additional assumption that $\eta$ is Lipschitz on $\mathbb{R}_{-}$. Eventually we will be able to easily remove this assumption thanks to Lemma \ref{approximation lemma}. \\
\textbf{Step 1}. Denote $U_{\varepsilon}\coloneqq V_{2}(u_{\varepsilon})$. We write an equation governing $U_{\varepsilon}$. Owing to \eqref{nonlocal Cauhy problem} we have 
\begin{align*}
\partial_{t}U_{\varepsilon}=\partial_{t}V_{2}(u_{\varepsilon})&=V'_{2}(u_{\varepsilon})\partial_{t}u_{\varepsilon}=-V'_{2}(u_{\varepsilon})\partial_{x}(V_{1}(U_{\varepsilon}\ast \eta_{\varepsilon})u_{\varepsilon})\\
&=-V'_{2}(u_{\varepsilon})u_{\varepsilon}\partial_{x}(V_{1}(U_{\varepsilon}\ast \eta_{\varepsilon}))-V_{1}(U_{\varepsilon}\ast \eta_{\varepsilon})V'_{2}(u_{\varepsilon})\partial_{x}u_{\varepsilon},    
\end{align*}
which is recast as  
\begin{align}
\partial_
{t}U_{\varepsilon}+V_{2}'(u_\varepsilon)u_{\varepsilon}\partial_{x}(V_{1}(U_{\varepsilon}\ast \eta_{\varepsilon}))+V_{1}(U_{\varepsilon}\ast \eta_{\varepsilon})V_{2}'(u_{\varepsilon})\partial_{x}u_{\varepsilon}=0. \label{eq for Uvareps} 
\end{align}
Similarly, if we denote $U=V_{2}(u)$, then $U$ is governed by 
\begin{align}
\partial_{t}U+V_{2}'(u)u\partial_{x}(V_{1}(U))+V_{1}(U)V_{2}'(u)\partial_{x}u=0.\label{eq for U}
\end{align}
Subtracting \eqref{eq for U} from  \eqref{eq for Uvareps} we get 
\begin{align}
&\partial_{t}(U_{\varepsilon}-U)+V_{2}'(u_{\varepsilon})u_{\varepsilon}\partial_{x}(V_{1}(U_{\varepsilon}\ast \eta_{\varepsilon}))-V_{2}'(u)u\partial_{x}(V_{1}(U)) \notag\\
&+V_{1}(U_{\varepsilon}\ast \eta_{\varepsilon})V_{2}'(u_{\varepsilon})\partial_{x}u_{\varepsilon}-V_{1}(U)V_{2}'(u)\partial_{x}u=0.\label{eq for difference}   \end{align}
Multiplying \eqref{eq for difference} by $\mathbf{s}(t,x)\coloneqq \mathrm{sgn}(U_{\varepsilon}(t,x)-U(t,x))$ we get 
\begin{align}
\partial_{t}\left\vert U_{\varepsilon}-U\right\vert&=\left(V_{2}'(u)u\partial_{x}(V_{1}(U))-V_{2}'(u_{\varepsilon})u_{\varepsilon}\partial_{x}(V_{1}(U_{\varepsilon}\ast \eta_{\varepsilon}))\right)\mathbf{s} \notag\\
&+\left(V_{1}(U)V_{2}'(u)\partial_{x}u-V_{1}(U_{\varepsilon}\ast \eta_{\varepsilon})V_{2}'(u_{\varepsilon})\partial_{x}u_{\varepsilon}\right)\mathbf{s}. \label{eq for abs}
\end{align}
Evaluating \eqref{eq for abs} at a point $x_{\max}=x$ of maximum of $x\mapsto\left\vert U_{\varepsilon}-U\right\vert(t,x)$ we deduce that 
\begin{align*}
\frac{\dd}{\dd t}\left\Vert (U_{\varepsilon}-U)(t,\cdot)\right\Vert_{\infty}=I+J 
\end{align*}
where we have set 
\begin{align*}
I\coloneqq\left(V_{2}'(u)u\partial_{x}(V_{1}(U))-V_{2}'(u_{\varepsilon})u_{\varepsilon}\partial_{x}(V_{1}(U_{\varepsilon}\ast \eta_{\varepsilon}))\right)\mathbf{s}    
\end{align*}
and 
\begin{align*}
J\coloneqq \left(V_{1}(U)V_{2}'(u)\partial_{x}u-V_{1}(U_{\varepsilon}\ast \eta_{\varepsilon})V_{2}'(u_{\varepsilon})\partial_{x}u_{\varepsilon}\right)\mathbf{s}.   
\end{align*}
Note that the evaluation at the point $(t,x)$ in the definition of $I,J$ is implicit.\\ 
\textbf{Step 2. Estimate on $I$.}  We rewrite $I$ as follows:
\begin{align}
I= (V_{2}'(u)u-V_{2}'(u_{\varepsilon})u_{\varepsilon})\partial_{x}(V_{1}(U))\mathbf{s}+V_{2}'(u_{\varepsilon})u_{\varepsilon}\partial_{x}(V_{1}(U)-V_{1}(U_{\varepsilon}\ast \eta_{\varepsilon}))\mathbf{s}\coloneqq I_{1}+I_{2}. \label{Decompostion of I}   
\end{align}
We start by estimating $I_{1}$. Note that 
\begin{align}
\left\Vert \partial_{x}(V_{1}(U))\right\Vert_{\infty}&=\left\Vert V_{1}'(V_{2}(u))V_{2}'(u)\partial_{x}u\right\Vert_{\infty}
\leq \left\Vert \partial_{x}u\right\Vert_{\infty}\left\Vert V_{1}'\right\Vert_{\infty,\mathrm{loc}}\left\Vert V_{2}'\right\Vert_{\infty,\mathrm{loc}}. \label{first est for I1}  \end{align}
In addition, we have 
\begin{align}
&\left\Vert (V_{2}'(u)u)(t,\cdot)-(V_{2}'(u_{\varepsilon})u_{\varepsilon})(t,\cdot)\right\Vert_{\infty} \notag\\
&\leq   \left\Vert (V_{2}'(u)-V_{2}'(u_{\varepsilon}))(t,\cdot)u(t,\cdot)\right\Vert_{\infty}+\left\Vert V_{2}'(u_{\varepsilon}(t,\cdot))(u_{\varepsilon}-u)(t,\cdot)\right\Vert_{\infty} \notag\\
&\leq \left\Vert V_{2}''\right\Vert_{\infty,\mathrm{loc}}\left\Vert u\right\Vert_{\infty} \left\Vert (u_{\varepsilon}-u)(t,\cdot)\right\Vert_{\infty}+\left\Vert V_{2}'\right\Vert_{\infty,\mathrm{loc}}\left\Vert (u_{\varepsilon}-u)(t,\cdot)\right\Vert_{\infty} \notag\\
&\leq L\left(\left\Vert V_{2}''\right\Vert_{\infty,\mathrm{loc}}\left\Vert u\right\Vert_{\infty}+\left\Vert V_{2}'\right\Vert_{\infty,\mathrm{loc}}\right)\left\Vert (U_{\varepsilon}-U)(t,\cdot)\right\Vert_{\infty}, \label{sec est for I2}
\end{align}
where in the last inequality we used assumption \eqref{V2 bilip assumption}.  Therefore, gathering \eqref{first est for I1}-\eqref{sec est for I2} we deduce that for some constant $C_{1}>0$ it holds that 
\begin{align}
I_{1}\leq  C_{1}\left\Vert (U_{\varepsilon}-U)(t,\cdot)\right\Vert_{\infty}. \label{I1 est}    
\end{align}
We proceed by estimating $I_{2}$. Rewrite $I_{2}$  as  
\begin{align}
I_{2}&=\mathbf{s}V_{2}'(u_{\varepsilon})u_{\varepsilon}\partial_{x}(V_{1}(U)-V_{1}(U\ast \eta_{\varepsilon})) \notag\\
&+\mathbf{s}V_{2}'(u_{\varepsilon})u_{\varepsilon}\partial_{x}(V_{1}(U\ast \eta_{\varepsilon})-V_{1}(U_{\varepsilon}\ast \eta_{\varepsilon}))\coloneqq I_{2}^{1}+I_{2}^{2}. \label{second term in I} 
\end{align} 
By Theorem \ref{Well posedness of nonlocal eq} it holds that $\left\Vert u_{\varepsilon}\right\Vert_{\infty}
\leq \left\Vert u^{0}\right\Vert_{\infty}$ and so  
\begin{align*}
I_{2}^{1}&\leq \left\Vert V_{2}'\right\Vert_{\infty,\mathrm{loc}}\left\Vert u^{0}\right\Vert_{\infty}\left\Vert (V_{1}'(U)\partial_{x}U-V_{1}'(U\ast \eta_{\varepsilon})\partial_{x}U\ast \eta_{\varepsilon})(t,\cdot)\right\Vert_{\infty}\\
&\leq \left\Vert V_{2}'\right\Vert_{\infty,\mathrm{loc}}\left\Vert u^{0}\right\Vert_{\infty}\left\Vert (V_{1}'(U)-V_{1}'(U\ast \eta_{\varepsilon}))(t,\cdot)\partial_{x}U(t,\cdot)\right\Vert_{\infty}\\
&+\left\Vert V_{2}'\right\Vert_{\infty,\mathrm{loc}}\left\Vert u^{0}\right\Vert_{\infty}\left\Vert V_{1}'(U\ast \eta_{\varepsilon}(t,\cdot))(\partial_{x}U\ast\eta_{\varepsilon}-\partial_{x}U)(t,\cdot)\right\Vert_{\infty}.    
\end{align*}
We can further estimate that 
\begin{align}
&\underset{t\in [0,T]}{\sup}\left\Vert (V_{1}'(U)-V_{1}'(U\ast \eta_{\varepsilon}))(t,\cdot)\partial_{x}U(t,\cdot)\right\Vert_{\infty} \notag\\
&\leq \left\Vert V_{1}''\right\Vert_{\infty,\mathrm{loc}} \left\Vert V_{2}'\right\Vert_{\infty,\mathrm{loc}}\left\Vert \partial_{x}u\right\Vert_{\infty}\underset{t\in [0,T]}{\sup}\left\Vert (U-U\ast \eta_{\varepsilon})(t,\cdot)\right\Vert_{\infty}=O(\varepsilon). \label{ine footnote}
\end{align}
Moreover, we have 
\begin{align*}
&\underset{t\in [0,T]}{\sup}\left\Vert V_{1}'((U\ast \eta_{\varepsilon})(t,\cdot))(\eta_{\varepsilon}\ast \partial_{x}U-\partial_{x}U)(t,\cdot)\right\Vert_{\infty}\\
&\leq \left\Vert V_{1}'\right\Vert_{\infty,\mathrm{loc}}\underset{t\in [0,T]}{\sup}\left\Vert (\partial_{x}U\ast \eta_{\varepsilon}-\partial_{x}U)(t,\cdot)\right\Vert_{\infty}=O(\varepsilon).
\footnotemark
\end{align*}
\footnotetext{
For any $(t,x)$ it holds that: 
\begin{align*}
&\left\vert \partial_{x}U\ast \eta_{\varepsilon}-\partial_{x}U\right\vert (t,x)\leq \frac{1}{\varepsilon}\int_{\mathbb{R}}\left\vert\partial_{x}U(x-y)-\partial_{x}U(x)\right\vert \eta(\frac{y}{\varepsilon})\ \dd y\\
&\leq (\left\Vert V_{2}''\right\Vert_{\infty}\left\Vert \partial_{x}u\right\Vert_{\infty}^{2}+\left\Vert V_{2}'\right\Vert_{\infty}\left\Vert \partial_{xx}u\right\Vert_{\infty})\int_{\mathbb{R}}\frac{\left\vert y\right\vert }{\varepsilon}\eta(\frac{y}{\varepsilon}) \ \dd y   \leq  C\varepsilon.    
\end{align*}
The bound \eqref{ine footnote} follows by the same consideration. 
}
This proves that 
\begin{align}
I_{2}^{1}=O(\varepsilon). \label{I12 vanishes asymptotically}    
\end{align}
To estimate $I_{2}^{2}$, we rewrite it as 
\begin{align}
I_{2}^{2}&=\mathbf{s}V_{2}'(u_{\varepsilon})u_{\varepsilon}(V_{1}'(U\ast \eta_{\varepsilon})-V_{1}'(U_{\varepsilon}\ast \eta_{\varepsilon}))\partial_{x}U\ast \eta_{\varepsilon} \notag\\
&+\mathbf{s}V_{2}'(u_{\varepsilon})u_{\varepsilon}V_{1}'(U_{\varepsilon}\ast \eta_{\varepsilon})\partial_{x}(U\ast \eta_{\varepsilon}-U_{\varepsilon}\ast \eta_{\varepsilon}). \label{sec term}
\end{align}
The first term in \eqref{sec term} is bounded by 
\begin{align}
&\left\Vert V_{2}'\right\Vert_{\infty,\mathrm{loc}}\left\Vert u^{0}\right\Vert_{\infty}\left\Vert \partial_{x}U\right\Vert_{\infty}\left\Vert V_{1}''\right\Vert_{\infty,\mathrm{loc}}\left\Vert (U_{\varepsilon}-U)(t,\cdot)\right\Vert_{\infty} \notag\\
&\leq \left\Vert V_{2}'\right\Vert_{\infty,\mathrm{loc}}^{2}\left\Vert u^{0}\right\Vert_{\infty}\left\Vert \partial_{x}u\right\Vert_{\infty}\left\Vert V_{1}''\right\Vert_{\infty,\mathrm{loc}}\left\Vert (U_{\varepsilon}-U)(t,\cdot)\right\Vert_{\infty}.  
\label{first term I22}      \end{align}
As for the second term in \eqref{sec term}, note that \begin{align*}
&\mathbf{s}(t,x)\partial_{x}\left(U\ast \eta_{\varepsilon}-U_{\varepsilon}\ast \eta_{\varepsilon}\right)(t,x)
\\&\mathbf{s}(t,x)\left(\frac{1}{\varepsilon^{2}}\int_{x}^{\infty}\eta'(\frac{x-y}{\varepsilon}
)(U-U_{\varepsilon})(t,y)\ \dd y-\frac{\eta(0^{-})(U-U_{\varepsilon})(t,x)}{\varepsilon}\right)\\
&=\frac{\eta(0^{-})}{\varepsilon}\left\Vert (U_{\varepsilon}-U)(t,\cdot)\right\Vert_{\infty}-\frac{\mathbf{s}(t,x)}{\varepsilon^{2}}\int_{x}^{\infty}\eta'(\frac{x-y}{\varepsilon})(U_{\varepsilon}-U)(t,y)
\ \dd y\\
&\geq \frac{\eta(0^{-})}{\varepsilon}\left\Vert (U_{\varepsilon}-U)(t,\cdot)\right\Vert_{\infty}-\frac{\eta(0^{-})}{\varepsilon}\left\Vert (U_{\varepsilon}-U)(t,\cdot)\right\Vert_{\infty}=0.     
\end{align*}
Since $V_{2}'V_{1}'\leq 0$ on $[\lambda_{\min},\lambda_{\max}]$ it follows that the second term in \eqref{sec term} is 
$\leq 0$. Hence, \eqref{first term I22} yields 
\begin{align}
I^{2}_{2}\leq \left\Vert V_{2}'\right\Vert_{\infty,\mathrm{loc}}^{2}\left\Vert u^{0}\right\Vert_{\infty}\left\Vert \partial_{x}u\right\Vert_{\infty}\left\Vert V_{1}''\right\Vert_{\infty,\mathrm{loc}}\left\Vert (U_{\varepsilon}-U)(t,\cdot)\right\Vert_{\infty}.    \label{I22 est}  
\end{align}
Combining \eqref{I12 vanishes asymptotically} with \eqref{I22 est}, we conclude that for some constant $C_{2}>0$ it holds that 
\begin{align}
I_{2}\leq O(\varepsilon)+ C_{2}\left\Vert (U_{\varepsilon}-U)(t,\cdot)\right\Vert_{\infty}. \label{I2 est}     
\end{align}
Combining \eqref{I1 est} with \eqref{I2 est} we have proved that for some constant $C>0$ it holds that 
\begin{align}
I\leq O(\varepsilon)+C\left\Vert (U_{\varepsilon}-U)(t,\cdot)\right\Vert_{\infty}. \label{final est on I}     
\end{align}
\\
\textbf{Step 3. Estimate on $J$.}  Since $x$ is a maximum point of $\left\vert U_{\varepsilon}-U\right\vert(t,\cdot)$ we have $\partial_{x}\left\vert U_{\varepsilon}-U\right\vert(t,x)=0$. Therefore we may rewrite $J$ as 
\begin{align*}
J&=\mathbf{s}(x)(V_{1}(U)-V_{1}(U_{\varepsilon}\ast \eta_{\varepsilon}))\partial_{x}U-\underset{=0}{\underbrace{V_{1}(U_{\varepsilon}\ast \eta_{\varepsilon})\partial_{x}\left\vert U_{\varepsilon}-U\right\vert}}\\
&= \mathbf{s}(x)(V_{1}(U)-V_{1}(U_{\varepsilon}\ast \eta_{\varepsilon}))\partial_{x}U. 
\end{align*}
Consequently we can estimate $J$ as follows.  
\begin{align}
\left\vert J\right\vert 
&\leq \left\Vert \partial_{x}U\right\Vert_{\infty}\left\Vert V_{1}'\right\Vert_{\infty,\mathrm{loc}}\left\Vert (U-U_{\varepsilon}\ast \eta_{\varepsilon})(t,\cdot)\right\Vert_{\infty} \notag\\
&\leq \left\Vert \partial_{x}u\right\Vert_{\infty}\left\Vert V_{2}'\right\Vert_{\infty,\mathrm{loc}} \left\Vert V_{1}'\right\Vert_{\infty,\mathrm{loc}}\left(\left\Vert (U-U\ast \eta_{\varepsilon})(t,\cdot)\right\Vert_{\infty}+\left\Vert(U-U_{\varepsilon})\ast \eta_{\varepsilon}(t,\cdot)\right\Vert_{\infty}  \right) \notag\\
&\leq O(\varepsilon)+\left\Vert \partial_{x}u\right\Vert_{\infty}\left\Vert V_{2}'\right\Vert_{\infty,\mathrm{loc}} \left\Vert V_{1}'\right\Vert_{\infty,\mathrm{loc}}\left\Vert (U_{\varepsilon}-U)(t,\cdot)\right\Vert_{\infty}.    
\label{J expanded}   
\end{align}
To conclude, there is a constant $C'>0$ such that 
\begin{align}
J\leq O(\varepsilon)+C'\left\Vert (U_{\varepsilon}-U)(t,\cdot)\right\Vert_{\infty}. \label{Final est on J}   \end{align} 
\textbf{Step 4. Conclusion.} Combining \eqref{final est on I} with \eqref{Final est on J} we conclude that for some constant $\mathcal{C}>0$ it holds that 
\begin{align*}
\frac{\dd}{\dd t}\left\Vert (U_{\varepsilon}-U)(t,\cdot)\right\Vert_{\infty}\leq O(\varepsilon)+\mathcal{C}\left\Vert (U_{\varepsilon}-U)(t,\cdot)\right\Vert
_{\infty}.
\end{align*}
By Gr\"onwall's inequality we conclude that 
\begin{align*}
\underset{t\in [0,T]}{\sup}\left\Vert (U_{\varepsilon}-U)(t,\cdot)\right\Vert_{\infty}=O(\varepsilon).      
\end{align*}
Thanks to assumption \eqref{V2 bilip assumption} we have 
\begin{align*}
\underset{t\in [0,T]}{\sup}\left\Vert (u_{\varepsilon}-u)(t,\cdot)\right\Vert_{\infty} \leq L\underset{t\in [0,T]}{\sup}\left\Vert (U_{\varepsilon}-U)(t,\cdot)\right\Vert_{\infty}.     
\end{align*}
So we  conclude that 
\begin{align}
\underset{t\in [0,T]}{\sup} \left\Vert (u_{\varepsilon}-u)(t,\cdot)\right\Vert_{\infty}=O(\varepsilon). \label{pre final}   \end{align}
Finally, we remove the assumption $\eta\in \mathrm{Lip}(\mathbb{R}_{-})$. Let $\{\eta_{n}\}_{n\in \mathbb{N}}$ and  $u_{\varepsilon n}$ be as in Lemma \ref{approximation lemma}. Then, according to \eqref{pre final} we have 
\begin{align*}
\underset{t\in [0,T]}{\sup}\left\Vert (u_{\varepsilon}-u)(t,\cdot)\right\Vert_{\infty}&\leq   \underset{t\in [0,T]}{\sup}\left\Vert (u_{\varepsilon}-u_{\varepsilon n})(t,\cdot)\right\Vert_{\infty} +\underset{t\in [0,T]}{\sup}\left\Vert (u_{\varepsilon n}-u)(t,\cdot)\right\Vert_{\infty}\\
&=\underset{t\in [0,T]}{\sup}\left\Vert (u_{\varepsilon}-u_{\varepsilon n})(t,\cdot)\right\Vert_{\infty} +O(\varepsilon).     
\end{align*}
Letting $n\rightarrow \infty$ and applying Lemma \ref{approximation lemma} concludes the proof. 
\end{proof}
 \vspace{0.5cm}
\noindent{\bf Acknowledgments.} The author would like to thank Gianluca Crippa and Laura Spinolo for their interest in this work and for several stimulating discussions. 
\bibliographystyle{abbrv}
\bibliography{references}
\end{document}